\newtheorem{defi}{Definition}
\newtheorem{theo}[defi]{Theorem}
\newtheorem{lem}[defi]{Lemma}
\theoremstyle{remark}
\title{Even factors in edge-chromatic-critical graphs with a small number of divalent vertices}
\author{Eckhard Steffen, Isaak H.~Wolf \thanks{funded by the Deutsche Forschungsgemeinschaft (DFG German Research Foundation) - STE 792/3-1} \\
		Institute for Mathematics, Paderborn University, \\ Warburger Str. 100, 33098 Paderborn,
		Germany. \\ es@upb.de, isaak.wolf@upb.de}
\date{}
\begin{document}

\maketitle

\begin{abstract}
A finite simple connected graph $G$ with maximum degree $k$ is $k$-critical if it has chromatic index $\chi'(G)=k+1$ and $\chi'(G-e)=k$ for every edge $e\in E(G)$. Bej and the first author \cite{BS} raised the question whether every $k$-critical graph has an even factor. We prove that
every $k$-critical graphs with at most $2k-6$ vertices of degree 2 has an even factor.
\end{abstract}

We consider finite simple graphs with vertex set $V(G)$ and edge set $E(G)$. The degree of a vertex $v \in V(G)$ is denoted by $d(v)$ and $\Delta(G)$ 
denotes the maximum degree of a vertex of $G$.
A vertex of degree 2 is also called a \textit{divalent} vertex.
A graph is \textit{even} if each component is eulerian and different from $K_1$.

For a subset $A$ of $V(G)$, the subgraph induced by $V(G)-A$ is denoted by $G-A$. If no two vertices in $A$ are connected by an edge, then $A$ is \textit{stable}. For a subset $E'$ of $E(G)$, the graph obtained from $G$ by deleting the edges in $E'$ is denoted by $G-E'$ (if $E'=\{e\}$, then we use the notation $G-e$). 

A \textit{$k$-edge-colouring} of a graph $G$ is a function $\varphi: E(G) \to \{1,...,k\}$; the elements of $\{1,...,k\}$ are called \textit{colours}. A $k$-edge-colouring $\varphi$ is \textit{proper}, if no two adjacent edges receive the same colour. For a vertex $v \in V(G)$ and a colour $i \in \{1,..,k\}$, we say colour $i$ is \textit{present} at $v$ if an edge incident to $v$ is coloured with colour $i$. Otherwise, colour $i$ is \textit{missing} at $v$. The set of colours present at $v$ is denoted by $\varphi(v)$; the set of colours missing at $v$ is denoted by $\bar{\varphi}(v)$.
If a proper $k$-edge-colouring of $G$ exists, then $G$ is \textit{$k$-edge-colourable}. The \textit{chromatic index}, denoted $\chi'(G)$, is the smallest integer $k$ such that $G$ is $k$-edge-colourable. Vizing \cite{V_Result}
proved the fundamental result on edge-colouring simple graphs by showing that 
the chromatic index of a graph $G$ is either $\Delta(G)$ or $\Delta(G)+1$. 

An edge $e \in E(G)$ is \textit{critical}, if $\Delta(G)=k$, $\chi'(G)=k+1$ and $\chi'(G-e)=k$. If $G$ is connected and all edges of $G$ are critical, then $G$ is \textit{$k$-critical}. Clearly,
every graph $H$ with $\chi'(H) = \Delta(H)+1$ contains
a $\Delta(H)$-critical subgraph. There had been several conjectures
with regard to the order or to (near) perfect matchings of critical graphs,
which all turned out to be false, see \cite{BS} for a survey. 

The situation changes when we consider 2-factors. 
In 1965, Vizing \cite{V} conjectured that every critical graph has a 2-factor. This conjecture has been verified for some specific classes of critical graphs as overfull graphs \cite{GS} or critical graphs
with large maximum degree in relation to their order \cite{Chen_Shan_2017, Luo_Zhao_2013}. Furthermore, some equivalent formulations or reduction to
some classes of critical graphs as e.g. critical graphs of even order are proved in \cite{BS, CJL_2020}. 
All these approaches have not yet led to significant progress in answering the question whether critical graphs have a 2-factor. 

To gain more insight into structural properties of critical graphs 
it might be useful to investigate slightly easier statements about factors in critical graphs. In \cite{BS} Bej and the first author conjectured that (1) every critical graph has a path-factor and (2) every critical graph has an even factor.  

The first conjecture is proved in \cite{KS}. For the second conjecture 
note that every bridgeless graph with minimum degree at least $3$ has an even factor \cite{F}. Thus, conjecture (2) is reduced to critical graphs with divalent vertices. Our main result is the following:

\begin{theo} \label{main result}
	Let $k \geq 3$ and $G$ be a $k$-critical graph. If 
	$G$ has at most $2k-6$ divalent vertices, then $G$ has an even factor.
\end{theo}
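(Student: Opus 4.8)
The plan is to use Vizing's Adjacency Lemma to control the structure around the divalent vertices, then reduce --- via suppression and the theorem of \cite{F} --- to a routing problem whose solvability is what the bound on the number of divalent vertices guarantees.

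First, the setup. A $k$-critical graph is bridgeless: a bridge $e=uv$ could be coloured after a proper $k$-edge-colouring of $G-e$ and a permutation of the colours on the side of $u$, contradicting $\chi'(G)=k+1$. Let $D$ be the set of divalent vertices. By Vizing's Adjacency Lemma, for $k\ge 3$: $D$ is stable; every vertex of $N(D):=\bigcup_{v\in D}N(v)$ has degree $k$; and every vertex of degree $k$ has at most one divalent neighbour. Hence, writing $N(v)=\{a_v,b_v\}$, the vertices $a_v,b_v$ $(v\in D)$ are pairwise distinct, so $|N(D)|=2|D|$. If $D=\varnothing$ then $\delta(G)\ge 3$ and $G$ is bridgeless, so \cite{F} applies; thus assume $1\le |D|\le 2k-6$, which forces $k\ge 4$. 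Put $S:=V(G)\setminus(D\cup N(D))$. An even factor of $G$ has degree $2$ at every vertex of $D$, so it contains $F:=\{va_v,vb_v:v\in D\}$; conversely, if $J\subseteq G-D$ has $\deg_J$ odd exactly on $N(D)$ and $\deg_J\ge 2$ on $S$, then $H:=J\cup F$ is an even factor of $G$ (degree $2$ on $D$; degree $\deg_J(w)+1$, even, on $N(D)$, as $w$ has a unique divalent neighbour; degree $\deg_J(w)$, even and $\ge 2$, on $S$). So it suffices to construct such a $J$; note $|N(D)|=2|D|$ is even, as an odd-degree set must be.

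For a first approximation, suppress every divalent vertex: let $G^\flat$ arise from $G$ by deleting each $v\in D$ and adding an edge $a_vb_v$. Then $G^\flat$ is bridgeless with $\delta(G^\flat)\ge 3$, and the new (``suppression'') edges form a matching $M$. By \cite{F}, $G^\flat$ has an even factor $H^\flat$. If $M\subseteq E(H^\flat)$, replacing each $a_vb_v$ by the path $a_vvb_v$ turns $H^\flat$ into an even factor of $G$ and we are done. Otherwise, with $D_{\mathrm{out}}=\{v\in D:a_vb_v\notin E(H^\flat)\}$ and $J_0$ the part of $H^\flat$ lying in $G-D$, it suffices to find a subgraph $R$ of the residual graph $(G-D)-E(J_0)$ whose odd-degree vertices are exactly $\{a_v,b_v:v\in D_{\mathrm{out}}\}$: then $J:=J_0\cup R$ has $\deg_J$ odd exactly on $N(D)$ and $\deg_J\ge 2$ on $S$, so $J\cup F$ is the desired even factor. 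Such an $R$ may be taken to be a union of $|D_{\mathrm{out}}|$ edge-disjoint paths pairing the vertices of $\{a_v,b_v:v\in D_{\mathrm{out}}\}$, so what is really needed is to route these pairs by edge-disjoint paths inside the residual graph.

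This routing step is where the hypothesis is used, and I expect it to be the main obstacle. One should choose $H^\flat$ so that it does not exhaust the degree of $G-D$ at $N(D)$ --- each $a_v$ has degree $k-1$ in $G-D$ and a unique divalent neighbour, so controlling $\deg_{H^\flat}(a_v)$ leaves room of size about $k-1-\deg_{H^\flat}(a_v)$ at $a_v$ in the residual graph --- and then, using that each of $a_v,b_v$ has $k-1\ge 3$ further neighbours, all of degree $k$, and that $|D_{\mathrm{out}}|\le|D|\le 2k-6$, one must show that the residual graph still keeps every endpoint of positive degree and retains enough edge-connectivity between the pairs for the routing to succeed. Balancing these three demands --- a good choice of $H^\flat$; no endpoint isolated when $E(J_0)$ is deleted; enough residual connectivity to link all pairs simultaneously by edge-disjoint paths --- is exactly where the bound $|D|\le 2k-6$ is consumed. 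A minor separate technicality is the case $a_v\sim b_v$ in $G$, where the suppression edge is parallel to an existing edge; it forces small case distinctions but does not change the strategy.
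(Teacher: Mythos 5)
Your reduction is sound as far as it goes: criticality gives bridgelessness and, via Vizing's Adjacency Lemma, the local structure around the divalent vertices (every vertex of $N(D)$ has degree $k$ and a unique divalent neighbour), and the translation of the problem into finding $J\subseteq G-D$ with odd degree exactly on $N(D)$ and even degree at least $2$ on $S$ is correct. But the argument stops precisely where the theorem's hypothesis has to do its work. The existence of the correcting subgraph $R$ --- a subgraph of the residual graph $(G-D)-E(J_0)$ whose odd-degree set is exactly $T=\{a_v,b_v: v\in D_{\mathrm{out}}\}$ --- is never established; you only list the demands (``a good choice of $H^\flat$'', ``no endpoint isolated'', ``enough residual connectivity'') and assert that the bound $|D|\le 2k-6$ is consumed there. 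Nothing in the proposal gives any control over the even factor $H^\flat$ supplied by \cite{F}: it may use none of the suppression edges, it may exhaust all edges of $G-D$ at some $a_v$, and the residual graph may distribute the vertices of $T$ so that some component contains an odd number of them, in which case no such $R$ exists at all. So the central step is a conjecture rather than a proof, and it is doubtful that bridgelessness plus the Adjacency Lemma (the only consequences of criticality you invoke) suffice to prove it; in particular, no mechanism in your outline produces the specific threshold $2k-6$.

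For comparison, the paper does not construct the factor. It assumes there is no even factor, takes a minimal Tutte-type barrier $X$ from the Akiyama--Kano characterization (Theorems \ref{characterisation} and \ref{reformulation}), and uses a weighting of the components of $G-X$ to locate a component $D$ whose neighbourhood consists almost entirely of divalent vertices. The contradiction with the hypothesis then comes from two Kempe-chain lemmas that exploit the criticality of individual edges far beyond what the Adjacency Lemma yields: Lemma \ref{cutlemma} shows that such a configuration forces more than $k(k-d(y))-d(x)+1$ divalent neighbours, and Lemma \ref{3-cutlemma} excludes divalent vertices from inclusion-minimal $3$-edge-cuts consisting of critical edges; a final counting via property \textit{(e)} finishes the degree-$k$ case. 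Your proposal has no counterpart to these lemmas, and without something of comparable strength the routing step you defer is exactly the hard part of the theorem, not a technicality.
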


\section{Preliminaries and useful lemmas}

The set of edges with one end in $A_{1}$ and the other in $A_{2}$ is denoted by $E_{G}(A_{1},A_{2})$, where $A_{i}$ is a vertex set, a single vertex (in this case, one end in $A_{i}$ means one end in $\{A_{i}\}$) or a subgraph (in this case, one end in $A_{i}$ means one end in $V(A_{i})$). Furthermore, $e_{G}(A_{1},A_{2})$ is the cardinality of $E_{G}(A_{1},A_{2})$. The set of neighbours of a vertex $v \in V(G)$ is denoted by $N(v)$; for a subset $A$ of $V(G)$ the \textit{neighbourhood} of $A$, denoted $N(A)$, is $\bigcup_{v \in A} N(v) -A$. If $G'$ is a subgraph of $G$, then we write $N(G')$ instead of $N(V(G'))$.

 Let $\varphi$ be a proper $k$-edge-colouring of a graph $G$ and $v \in V(G)$. For two 
 different colours $i,j \in \{1,...,k\}$, the subgraph induced by the edges that are coloured $i$ or $j$ is denoted by $K(i,j)$. Its components are called \textit{$(i,j)$-Kempe chains} or sometimes just \textit{Kempe chains}. 
 Clearly, a Kempe chain is a path or a circuit. If $\{i,j\} \cap \varphi(v) \neq \emptyset$, then the unique component of $K(i,j)$ that contains $v$ is denoted by $P_{v}^{\varphi}(i,j)$. 
 We will omit the upper index if this does not cause any ambiguity. 
 A new proper $k$-edge-colouring, denoted by $\varphi/P_{v}(i,j)$, can be obtained from $\varphi$ by interchanging 
 colours $i$ and $j$ in $P_{v}(i,j)$.

 In the proofs of Lemma~\ref{cutlemma} and \ref{3-cutlemma} we will use the following basic observation without reference:
Let $G$ be a graph with a critical edge $vw$ and let $\varphi$ be a proper $\Delta(G)$-edge-colouring of $G-vw$. If colour $i$ is missing at $v$ and $j$ is missing at $w$, then colour $i$ is present at $w$, colour $j$ is present at $v$ and $P_{v}^{\varphi}(i,j)$ is a $v,w$-path.

\begin{lem} \label{cutlemma}
Let $G$ be a graph with $\Delta(G)=k$, $\chi'(G)=k+1$, and let $A \subseteq V(G)$ 
be a set of vertices such that 
\begin{itemize}
\item[$\bullet$] $e_{G}(A,v)=1$ for every $v \in N(A)$, and
\item[$\bullet$] $N(A)=\{x,y,w_{1},...,w_{l}\}$ with $l\geq1$, $d(y)\leq d(x)<k$ and $d(w_{i})=2$ for every $i \in \{1,...,l\}$.
\end{itemize}

If at least one edge in $E_{G}(A,\{w_{1},...,w_{l}\})$ is critical,
then $l > k(k-d(y))-d(x)+1$.
\end{lem}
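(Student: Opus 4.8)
The plan is to argue by contradiction: assume $l\leq k(k-d(y))-d(x)+1$ and build a proper $k$-edge-colouring of $G$, contradicting $\chi'(G)=k+1$. After relabelling the $w_i$ I may assume the critical edge in $E_G(A,\{w_1,\dots,w_l\})$ is $e=aw_1$ with $a\in A$; fix a proper $k$-edge-colouring $\varphi$ of $G-e$. Since $w_1$ is divalent it carries a single edge $f_1=w_1z_1$ in $G-e$, say of colour $c_1$, so $\bar\varphi(w_1)=\{1,\dots,k\}\setminus\{c_1\}$. Because $e$ is critical, $\bar\varphi(a)\cap\bar\varphi(w_1)=\emptyset$, so $\bar\varphi(a)\subseteq\{c_1\}$; as $|\bar\varphi(a)|\geq 1$ this forces $\bar\varphi(a)=\{c_1\}$ and $d(a)=k$. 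Moreover, for every colour $\gamma\neq c_1$ the path $P_a(c_1,\gamma)$ is an $a,w_1$-path which must reach $w_1$ along $f_1$, hence it passes through $z_1$ entering it along a $\gamma$-coloured edge; as this holds for all $k-1$ choices of $\gamma$, the vertex $z_1$ is also a $k$-vertex, and in particular $z_1\notin A\cup N(A)$. So the configuration is already rigid near $a$ and $w_1$.

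The core is a recolouring scheme. Starting from $\varphi$, I would try to make some colour simultaneously missing at $a$ and at $w_1$; assigning that colour to $e$ finishes the contradiction. Since $a$ misses only $c_1$ and $c_1$ sits on $f_1$ at $w_1$, the natural first moves — recolour $f_1$, or rotate the missing colour of $a$ via a Kempe chain — are all blocked, and the way to relieve such a block is to follow a fan- or Kierstead-path-type structure out of $a$, along the cut edges $E_G(A,N(A))$ and into the neighbourhood of $A$, trying at each vertex to free the colour currently playing the role of $c_1$. Every such block is again of one of a few rigid shapes (the current vertex is a $k$-vertex, or two relevant vertices have disjoint missing-colour sets), and in each case the structure is forced to advance by one more step. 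The key structural input is that a divalent $w_i$ meets $A$ in just one edge and has exactly one other edge, so it can occur in this structure only in very restricted ways; and whenever the structure reaches $x$ or $y$ along a colour missing there, the recolouring does go through, which is the desired contradiction. Hence every colour missing at $x$ and every colour missing at $y$ must be ``blocked'', and a block consumes fresh divalent neighbours of $A$.

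It then remains to count. There are $k-d(y)$ colours missing at $y$; the scheme can avoid $y$ only at the cost of roughly $k$ new divalent vertices for each of these colours, while the vertex $x$ — which, using $d(y)\leq d(x)$, plays the role of the \emph{less} constrained of the two low-degree vertices — contributes a saving of $d(x)$, and $w_1$ together with one further forced vertex accounts for a $+2$. Assembling the inequalities gives $l\geq k(k-d(y))-d(x)+2$, i.e.\ $l>k(k-d(y))-d(x)+1$, contradicting the assumption.

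I expect the real work — and the main obstacle — to be the second step. Because $A$ is an arbitrary vertex set rather than a single vertex, the fan has to be pushed \emph{through} $A$, so one must check that every Kempe chain and every single-edge recolouring used is compatible with the part of the colouring already committed — in particular that chains routed through $A$ cannot loop back to $a$ or $w_1$ prematurely — and that each extension genuinely uses a new divalent vertex rather than re-using one. Making this precise, together with the exact bookkeeping that distinguishes the roles of $x$ and $y$, is what produces the slightly asymmetric bound $k(k-d(y))-d(x)+1$, and is where I would expect most of the effort to go.
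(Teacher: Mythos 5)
Your preliminary deductions are correct and match the paper's setup: for the critical edge $e=aw_1$ and a colouring $\varphi$ of $G-e$, the missing-colour sets at $a$ and $w_1$ are disjoint, so $\bar\varphi(a)=\{c_1\}$, and every chain $P_a(c_1,\gamma)$ is an $a,w_1$-path. But everything after that is a plan, not a proof, and the plan's core is exactly the part you leave open. The assertions that every ``block'' of your fan/Kierstead-type structure ``is again of one of a few rigid shapes,'' that each block ``consumes fresh divalent neighbours of $A$,'' and that avoiding $y$ costs ``roughly $k$ new divalent vertices'' per missing colour are precisely the content of the lemma; none of them is defined, let alone proved, and the final inequality $l\geq k(k-d(y))-d(x)+2$ is asserted rather than derived. (Your initial contradiction hypothesis $l\leq k(k-d(y))-d(x)+1$ is in fact never used in the sketch, which signals that the argument you have in mind is really a direct count that you have not yet constructed.) You also lack the device that makes the bound asymmetric in $x$ and $y$: the paper first normalizes the colouring (its Claim~1) so that the colour missing at the $A$-endpoint of the critical edge is \emph{also missing at $x$}; this is what prevents $x$ from ever absorbing the crucial chain endpoints and is what produces the $-d(x)$ term. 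Your sketch gestures at ``a saving of $d(x)$'' with no mechanism behind it.

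For comparison, the paper does not recolour its way to a proper colouring of $G$ at all; it counts. Each of the $k-1$ chains $P_{w'}(1,i)$ is a $w',w$-path and hence crosses the cut; its \emph{last} crossing is recorded as a triple $(i_1,z_i,i_2)$ (set $M_1$, with $|M_1|=k-1$). Then, for each colour $j$ missing at $z_i$, the chain $P_{z_i}(i_1,j)$ must itself contain a crossing triple — otherwise swapping along it would destroy the $w',w$-path property and allow $w'w$ to be coloured — and its \emph{first} triple is recorded (set $M_2$). The paper's Claims~4--6 establish that $M_1\cap M_2=\emptyset$, that distinct pairs $(j,z_i)$ give distinct triples, and that there are at least $(k-d(y))(k-1)$ such pairs; finally each divalent vertex of $N(A)$ hosts at most one triple while $x$ and $y$ host at most $d(x)-1$ and $d(y)-1$, which yields the stated bound. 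This first/last-triple bookkeeping is exactly the ``each extension genuinely uses a new divalent vertex rather than re-using one'' issue you flag as the main obstacle but do not resolve. As it stands, the proposal identifies the right ingredients (Kempe chains forced through the cut, divalent vertices limiting crossings, a count) but contains a genuine gap where the quantitative argument should be.
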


\begin{proof}
Let $w \in\{w_{1},...,w_{l}\}$ be a divalent vertex, let $w'$ be the unique neighbour of $w$ that belongs to $A$, and let the edge $w'w$ be critical. 
\vspace{.2cm}

\underline{Claim 1:} There is a proper $k$-edge-colouring $\varphi$ of $G-w'w$ such that $\bar{\varphi}(w')=\varphi(w)=\{1\}$ and $1 \in \bar{\varphi}(x)$.

{\it Proof.} Since $w'w$ is critical there is a proper $k$-edge-colouring $\varphi'$ of $G-w'w$. Furthermore, $\bar{\varphi}'(w')=\varphi'(w)=\{i\}$ for a colour $i \in \{1,...,k\}$. Since $d(x)<k$, there is a colour $j$ that is missing at $x$. If $i=j \neq 1$, then we obtain a colouring with the desired properties by interchanging colours $i$ and $1$. If $i \neq j$, then $P_{w'}(i,j)$ is a $w',w$-path and thus does not contain $x$. Therefore, the colouring $\varphi''$, defined by $\varphi''=\varphi'/P_{w'}(i,j)$, satisfies $\bar{\varphi}''(w')=\varphi''(w)=\{j\}$ and $j \in \bar{\varphi}''(x)$. Again, if $j \not= 1$, then a colouring with the desired properties can be obtained by interchanging colours $j$ and $1$. Thus, Claim 1 is proved. 
\vspace{.2cm}

Now fix a proper $k$-edge-colouring $\varphi$ of $G-w'w$ with the properties stated in Claim 1. Define a set $M$ as follows:
\begin{align*}
M=\{(h,z,h'): \text{ }& z \in N(A)-\{w\}, \{h,h'\} \subseteq \varphi(z), h \neq h',\\& \varphi(e)=h \text{, where } e \text{ is the unique edge in } E_{G}(A,z)\}.
\end{align*}

We prove a lower bound for the number of triples in $M$, which will be used to obtain the lower bound for $l$.\\
For each triple $(h,z,h')$ of $M$ there is a unique Kempe chain $P$, that contains the two edges incident with $z$ that are coloured $h$ and $h'$. In this case we say \textit{$P$ contains $(h,z,h')$}. Furthermore, if $P$ is a path and $v$ is an end vertex of $P$, then we can interpret $P$ as a vertex-list starting with $v$. This gives an order of the vertices of $P$ and thus an order of the triples contained in $P$. We define the \textit{first} and the \textit{last} triple contained in $P$ (starting with $v$) in the natural way. An example is given in Figure~\ref{fig:1}.

\begin{figure}
	\centering
	\includegraphics[scale=0.37]{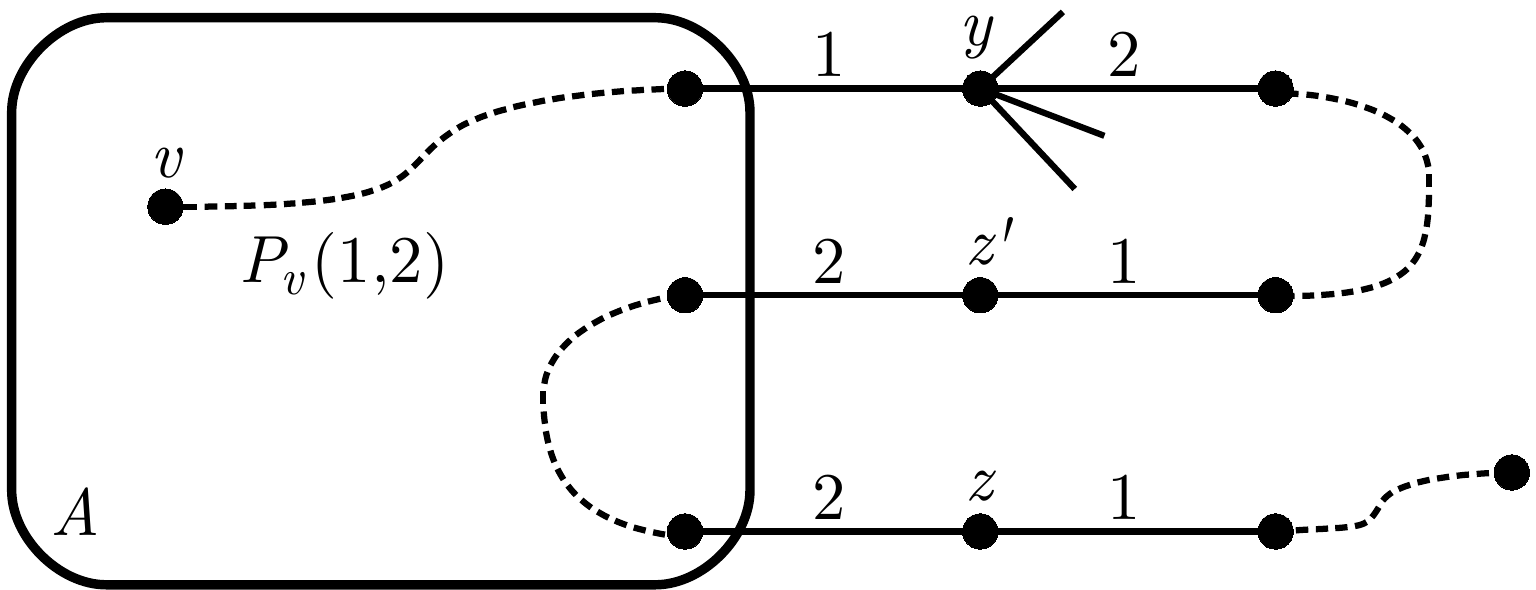}
	\caption{$P_{v}(1,2)$ contains $(1,y,2)$, $(2,z',1)$ and $(2,z,1)$. $(1,y,2)$ is the first and $(2,z,1)$ the last triple contained in $P_{v}(1,2)$ (starting with $v$).}
	\label{fig:1}
\end{figure}

If $i \in \{2,...,k\}$, then the Kempe chain $P_{w'}(1,i)$ is a path with end vertices $w'$ and $w$ and thus, it contains at least one triple of $M$. Therefore, we can define a subset $M_1$ of $M$ as follows:
For every $i \in \{2,...k\}$ let $(i_{1},z_{i},i_{2})$ be the last triple contained in $P_{w'}(1,i)$ (starting with $w'$). Let $M_1=\{(i_{1},z_{i},i_{2}):i \in \{2,...k\}\}$.
Figure~\ref{fig:2} shows an example. We note, that $\{i_1,i_2\}=\{1,i\}$ for every $i \in \{2,...,k\}$ and in particular, $x$ is not in a triple of $M_1$, since colour $1$ is missing at $x$.

\begin{figure}
	\centering
	\includegraphics[scale=0.37]{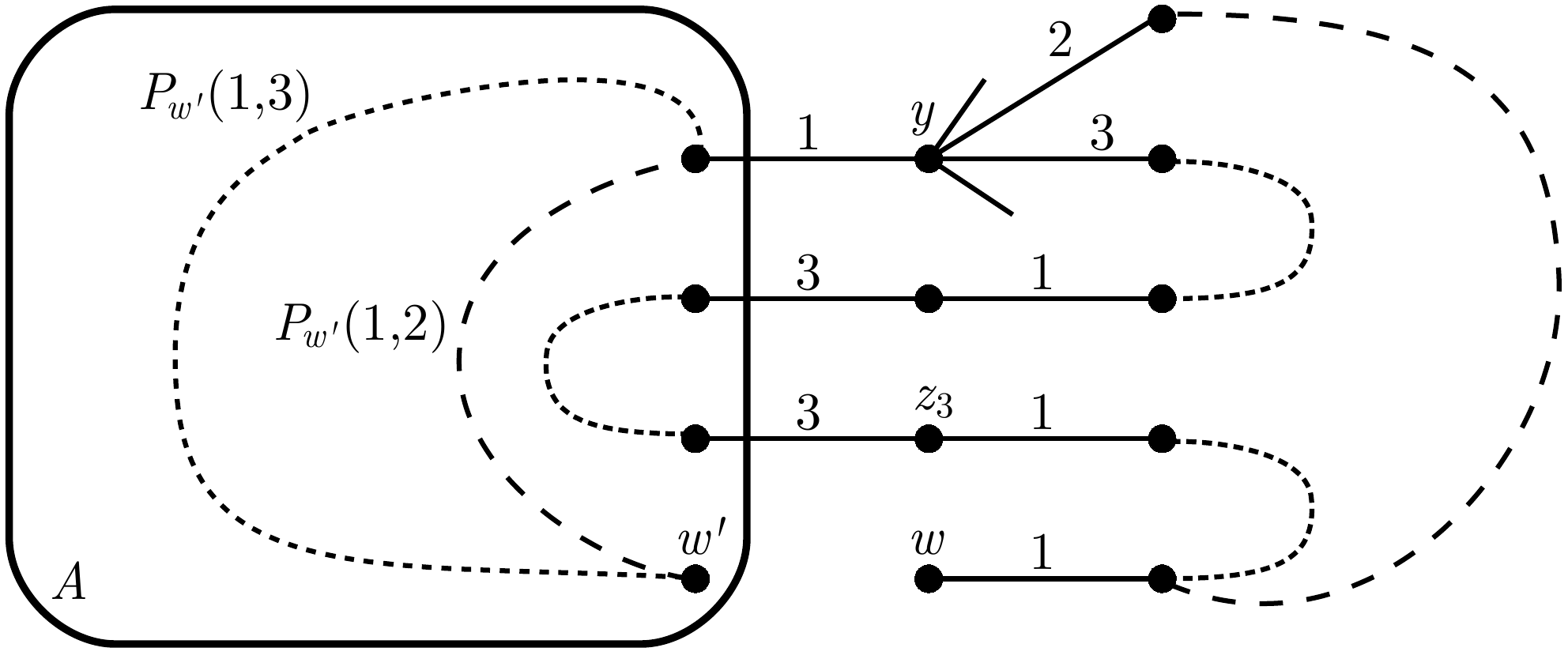}
	\caption{The triple $(3,z_3,1)$ is the last triple contained in $P_{w'}(1,3)$; $(1,y,2)$ is the last triple contained in $P_{w'}(1,2)$(starting with $w'$). Thus, $(3,z_3,1), (1,z_2,2) \in M_1$ where $z_2=y$.}
	\label{fig:2}
\end{figure}

\vspace{.2cm}
\underline{Claim 2:} $\vert M_{1} \vert = k-1$.

{\it Proof.}
Let $i, i'$ be two different colours of $\{2,...,k\}$. Then, $\{i_1,i_2\}=\{1,i\}\neq\{1,i'\}=\{i'_1,i'_2\}$, and hence $(i_{1},z_{i},i_{2}) \neq (i'_{1},z_{i'},i'_{2})$.

\vspace{.2cm}
\underline{Claim 3:} Let $i \in \{2,...,k\}$ and $j \in \bar{\varphi}(z_{i})$. Then $P_{z_{i}}(i_{1},j)$ contains a triple of $M$.

{\it Proof.}
Suppose, $P_{z_{i}}(i_{1},j)$ does not contain a triple of $M$. Then, the colouring $\varphi'$, defined by $\varphi'=\varphi/P_{z_{i}}(i_{1},j)$, satisfies $\bar{\varphi}'(w')=\varphi'(w)=\{1\}$. Since $(i_{1},z_{i},i_{2})$ is the last triple contained in  $P_{w'}(1,i)$ (starting with $w'$), the Kempe chain $P_{w}^{\varphi'}(i_{1},i_{2})$ has end vertices $w$ and $z_{i}$. In particular $P_{w}^{\varphi'}(i_{1},i_{2})$ is not a $w',w$-path. We have either $i_{1}=1$ or $i_{2}=1$, a contradiction. See Figure~\ref{fig:3} for an example. 

\begin{figure}
	\centering
	\includegraphics[scale=0.37]{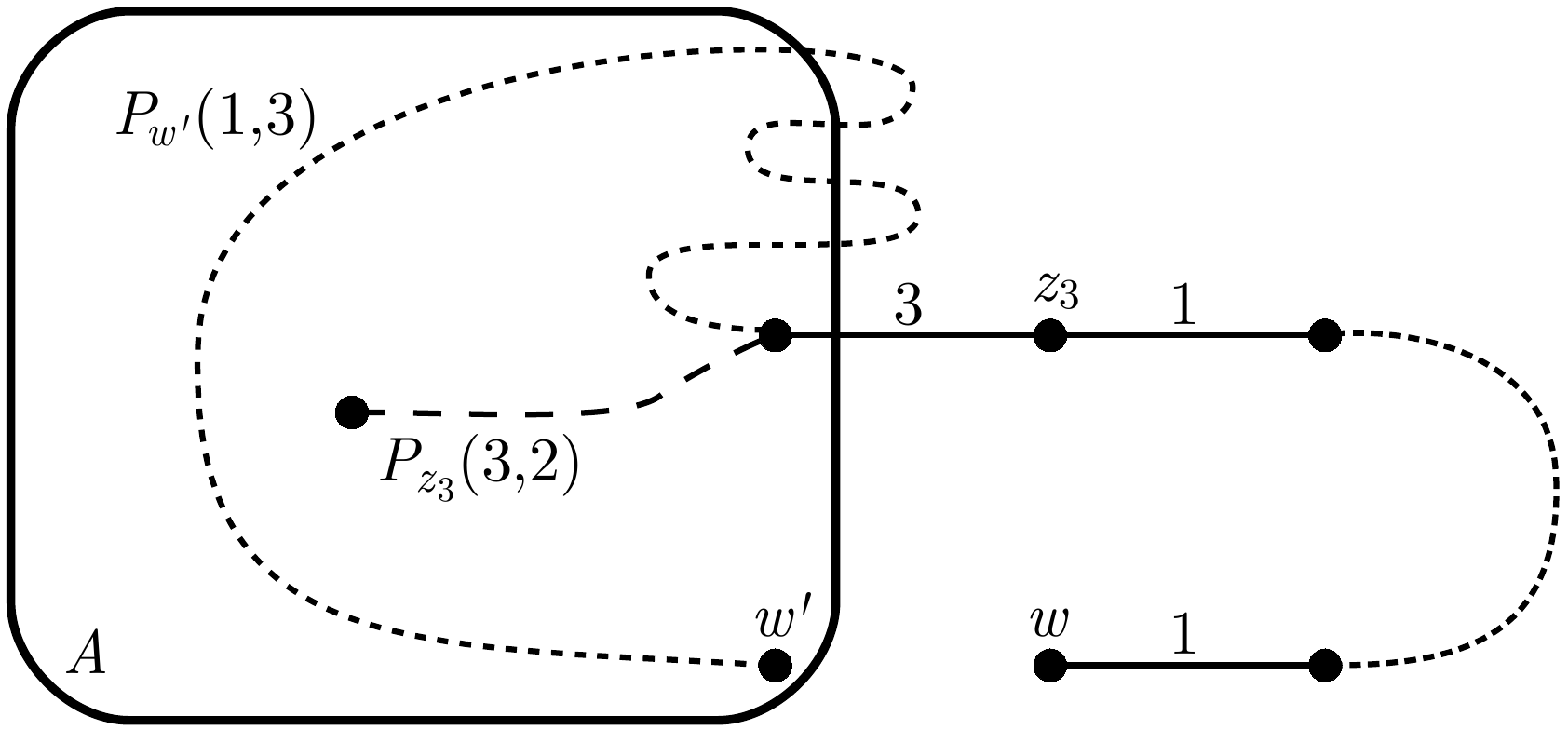}
	\caption{The triple $(3,z_3,1)$ is in $M_1$. Colour $2$ is missing at $z_3$. The Kempe chain $P_{z_3}(3,2)$ does not contain a triple of $M$. Interchanging colours $3$ and $2$ in $P_{z_3}(3,2)$ produces a contradiction, since $P_{w}(1,3)$ is not longer a $w',w$-path.}
	\label{fig:3}
\end{figure}

\vspace{.2cm}
Let 
	$M_{2}=\{(h,z,h'): \text{ } i \in \{2,...,k\}, j \in \bar{\varphi}(z_{i}),(h,z,h') \text{ is the first triple contained in } P_{z_{i}}(i_{1},j) \\ \text{ (starting with } z_{i})\}.$
An example is given in Figure~\ref{fig:4}.

\begin{figure}
\centering
\includegraphics[scale=0.37]{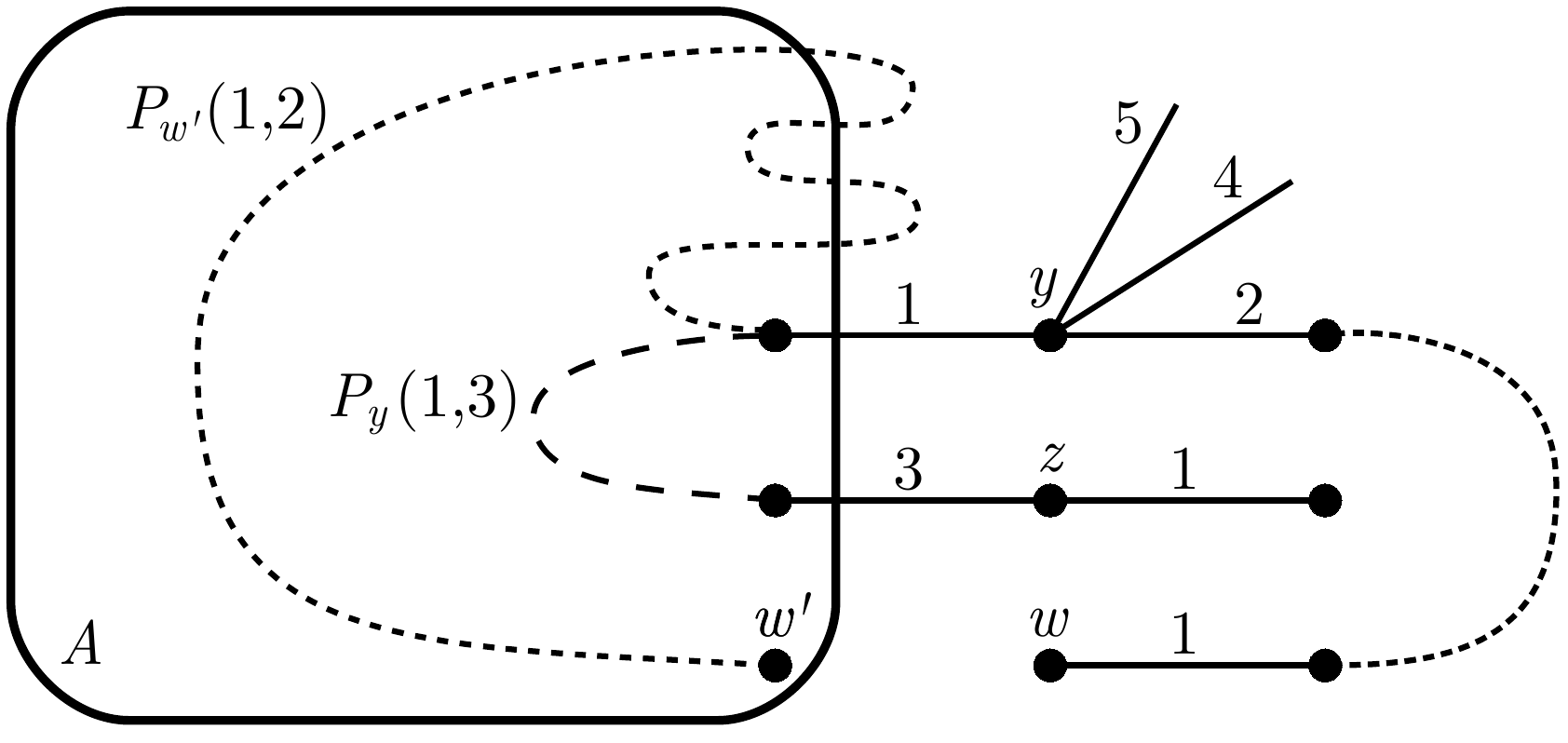}
\caption{The triple $(1,z_2,2)$ is in $M_1$, where $z_2=y$. Colour $3$ is missing at $y$. The triple $(3,z,1)$ is the first triple contained in $P_{y}(1,3)$ (starting with $y$) and thus in $M_2$.}
\label{fig:4}
\end{figure}

\vspace{.2cm}
\underline{Claim 4:} $M_{1} \cap M_{2}=\emptyset$.

{\it Proof.}
We have $z_{i} \notin \{w',w\}$ for every $i \in \{2,...,k\}$. Hence, every triple of $M_2$ is contained in a path with an end vertex that is neither $w'$ nor $w$, whereas every triple of $M_1$ is contained in a $w',w$-path.

\vspace{.2cm}
\underline{Claim 5:} $\vert M_{2} \vert = \vert \{(j,z_{i}): i \in \{2,...,k\}, j \in \bar{\varphi}(z_{i})\} \vert$.

{\it Proof.}
Let $i,i' \in \{2,...,k\}$, $j \in \bar{\varphi}(z_{i})$ and $j' \in \bar{\varphi}(z_{i'})$ such that $(j,z_i)\neq(j',z_{i'})$. Then, the paths $P_{z_{i}}(i_{1},j)$ and $P_{z_{i'}}(i'_{1},j')$ have at least one different colour or a different starting vertex (interpreted as a vertex-list starting with $z_{i}$ or $z_{i'}$ respectively). Therefore, these two paths have different first triples (in the case $z_{i} \neq z_{i'}$ we use the fact that both triples are first triples).

\vspace{.2cm}
\underline{Claim 6:} $\vert \{(j,z_{i}): i \in \{2,...,k\}, j \in \bar{\varphi}(z_{i})\} \vert \geq (k-d(y))(k-1)$.

{\it Proof.}
Since colour 1 is missing at $x$, there is no $i \in \{2,...,k\}$ with $z_{i}=x$. If  $z_i=z_{i'}$ for two different integers $i,i'$ of $\{2,...,k\}$, then $z_i=z_{i'}=y$, since every vertex in $N(A)\setminus \{x,y\}$ is divalent. Furthermore, the number of indices $i \in \{2,...,k\}$ with $z_{i}=y$ is at most $d(y)-1$. Vertex $y$ misses $k-d(y)$ colours whereas all other vertices in $N(A)-\{w,x\}$ miss $k-2$ colours. In conclusion:
\begin{align*}
&\vert \{(j,z_{i}): i \in \{2,...,k\}, j \in \bar{\varphi}(z_{i})\} \vert\\
\geq &(k-d(y))+(k-1-(d(y)-1))(k-2)\\
=&(k-d(y))(k-1).
\end{align*}

\vspace{.2cm}
We now prove that $l \geq k(k-d(y))-d(x)+1$.
Since $e_{G}(A,v)=1$ for every $v \in N(A)$ and all vertices in $N(A)-\{x,y\}$ are divalent, the inequality $l >\vert M \vert - (d(x)-1)-(d(y)-1)$ holds. By Claims 2, 4, 5 and 6, we have:
\begin{align*}
l &>\vert M \vert - (d(x)-1)-(d(y)-1)\\
&\geq \vert M_{1} \vert +\vert M_{2} \vert  - (d(x)-1)-(d(y)-1)\\
&\geq k-1+(k-d(y))(k-1)-(d(x)-1)-(d(y)-1)\\
&= k(k-d(y))-d(x)+1.
\end{align*} \end{proof}

\begin{lem} \label{3-cutlemma} Let $k > 3$ and 
let $G$ be a graph with $\Delta(G)=k$ and $\chi'(G)=k+1$. If $E'\subseteq E(G)$ is an inclusion-minimal edge-cut consisting of three critical edges, then no edge in $E'$ is incident to a divalent vertex.
\end{lem}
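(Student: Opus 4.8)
The plan is to argue by contradiction. Suppose $e_1=x_1y_1\in E'$ with $d(x_1)=2$, and let $V_1\ni x_1$ and $V_2\ni y_1$ be the two sides of the cut; write $e_2=x_2y_2$ and $e_3=x_3y_3$ for the other two cut edges, with $x_j\in V_1$ and $y_j\in V_2$. Since $E'$ is inclusion-minimal no two of its edges form an edge-cut, so in particular no divalent vertex meets two of $e_1,e_2,e_3$; hence the second edge $f=x_1x_1'$ at $x_1$ lies in $G[V_1]$, and for all $i,j$ the set $\{e_i,e_j\}$ leaves $V_1$ and $V_2$ in a common component. First I would fix degrees: in a $k$-edge-colouring $\varphi$ of $G-e_1$ the vertex $x_1$ has degree $1$ and so misses $k-1$ colours, while criticality of $e_1$ makes the colours missing at $y_1$ disjoint from these; hence $d(y_1)=k$ and $y_1$ misses exactly one colour. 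Relabelling colours so that $\varphi(f)=1$, we get $\bar\varphi(x_1)=\{2,\dots,k\}$ and $\bar\varphi(y_1)=\{1\}$; and since every chain $P_{x_1}^\varphi(1,i)$ must pass through the interior vertex $x_1'$ along an edge coloured $i$, the same count gives $d(x_1')=k$.

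The next step analyses these Kempe chains. For $i\in\{2,\dots,k\}$ the chain $P_{x_1}^\varphi(1,i)$ is an $x_1,y_1$-path; as $\{e_2,e_3\}$ are the only $V_1$--$V_2$ edges of $G-e_1$ it uses exactly one of them, and since it consists of edges coloured $1$ or $i$ this forces $\{1,i\}\cap\{\varphi(e_2),\varphi(e_3)\}\neq\emptyset$. Since $i$ runs through $k-1\geq3$ values while $\{\varphi(e_2),\varphi(e_3)\}$ has at most two elements, one of $e_2,e_3$ must be coloured $1$; after a Kempe swap on a suitable colour-pair chain avoiding $x_1$ (to ensure not both of $e_2,e_3$ are coloured $1$) we may assume $\varphi(e_2)=1$ and put $c=\varphi(e_3)\neq1$. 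Then $P_{x_1}^\varphi(1,i)$ uses $e_2$ for every $i\in\{2,\dots,k\}\setminus\{c\}$, hence passes through $x_2$ and through $y_2$ as interior vertices (here $y_2\neq y_1$, since such a path would otherwise reach $y_1$ along the colour-$1$ edge $e_2$ while $y_1$ misses colour $1$). Consequently $d(x_2)\geq k-1$ and $d(y_2)\geq k-1$, each with $e_2$ its colour-$1$ edge and one edge of each colour in $\{2,\dots,k\}\setminus\{c\}$.

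For the endgame I would proceed in three steps. \emph{(A)} If $x_2=x_3=:x^*$, the above pattern forces $d(x^*)=k$ with $e_2$ and $e_3$ the colour-$1$ and colour-$c$ edges at $x^*$; but then $P_{x_1}^\varphi(1,c)$ must cross the cut, hence pass through $x^*$, hence use \emph{both} $e_2$ and $e_3$ — a double crossing, which is impossible. So $x_1,x_2,x_3$ are pairwise distinct, $N(V_2)=\{x_1,x_2,x_3\}$, and $e_G(V_2,v)=1$ for each $v\in N(V_2)$. \emph{(B)} If $d(x_2),d(x_3)<k$, apply Lemma~\ref{cutlemma} with $A=V_2$, taking $x_1$ as the (divalent) vertex $w_1$ — the critical edge $e_1$ lies in $E_G(V_2,x_1)$ — and $\{x,y\}=\{x_2,x_3\}$: it yields $1=l>k(k-d(y))-d(x)+1$, which is false because $k(k-d(y))\geq k>d(x)$. \emph{(C)} Hence, interchanging $e_2$ and $e_3$ if necessary, $d(x_2)=k$; now $x_2$ has exactly one edge of each colour, and its colour-$c$ edge $g_c$ lies in $G[V_1]$. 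If $P_{x_1}^\varphi(1,c)$ uses $e_3$, then the $(1,c)$-chain through $x_2$ is disjoint from $x_1$, from $f$, and from $e_3$; swapping it recolours $e_2$ to $c$ while $f$ keeps colour $1$ and $e_3$ keeps colour $c$, so that both of the remaining cut edges are coloured $c\neq1$. But then for $i\neq c$ no $P_{x_1}(1,i)$ can cross the cut, contradicting that all $k-1\geq3$ of them must.

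This leaves one configuration — which I expect to be the crux — namely that $P_{x_1}^\varphi(1,c)$ also uses $e_2$, so that \emph{all} $k-1$ chains funnel through $e_2$ and none meets $e_3$ (in particular $d(x_2)=d(y_2)=k$). Here the cheap recolourings fail, and the plan is to bring in the criticality of the heavy edge $e_2$ itself: colour $G-e_2$ with $k$ colours and apply the basic observation to the $2$-edge-cut $\{e_1,e_3\}$ it leaves. Equivalently, since $d(x_2)=k$ the graph $G-x_1$ is class~$1$, while criticality of $e_1$ forces $y_1$ and $x_1'$ to miss a common colour in \emph{every} $k$-edge-colouring of $G-x_1$, so they always lie on a common Kempe chain, which must cross the $2$-edge-cut $\{e_2,e_3\}$ exactly once; rerunning the colour count of the second paragraph inside $G-x_1$ again forces all but one of these chains through $e_2$, and the final contradiction should come from a degree count or a last recolouring at $x_2$ or $y_2$. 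Making this last case airtight is the main obstacle.
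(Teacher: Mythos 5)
Your argument is incomplete, and you say so yourself: the configuration in which every Kempe chain $P_{x_1}^{\varphi}(1,i)$, $i\in\{2,\dots,k\}$, crosses the cut through $e_2$ (so that $e_3$ lies on none of them) is left open, and this is not a peripheral case but the generic one that the whole method must overcome. In that configuration all the local data you have extracted --- $d(y_1)=d(x_1')=d(x_2)=d(y_2)=k$, $\varphi(e_2)=1$, $\varphi(e_3)=c$ --- is perfectly consistent, and the two continuations you sketch do not close it: recolouring the $(1,c)$-chain through $e_3$ just lands you in the ``both cut edges coloured $1$'' situation, which is again consistent with $k-1$ chains each crossing once; and the analysis of $G-e_2$ or $G-x_1$ only reproduces the same funnel structure one level down, with no new invariant to contradict. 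A secondary flaw: in step (C) the reduction ``interchanging $e_2$ and $e_3$ if necessary, $d(x_2)=k$'' is not a legitimate w.l.o.g., because by that point $e_2$ and $e_3$ play asymmetric roles ($e_2$ is by definition the edge coloured $1$ that carries the chains); if $d(x_3)=k$ but $d(x_2)=k-1$ you cannot simply swap labels, and the properties you then use ($x_2$ has one edge of each colour, its colour-$c$ edge lies in $G[V_1]$) are not available for $x_3$. Step (B), invoking Lemma~\ref{cutlemma} with $A=V_2$, is a nice observation and correct, but it only disposes of the subcase $d(x_2),d(x_3)<k$.

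The underlying reason you get stuck is that you try to force a contradiction inside a single colouring of $G-e_1$ by Kempe swaps, whereas the paper changes viewpoint entirely: it cuts $G$ along $E'$ into $G_A$ and $G_B$, classifies $k$-edge-colourings of each piece into five \emph{types} according to which of $e_1,e_2,e_3$ receive equal colours, and shows (i) from criticality of $e_1$ that $G_A$ admits colourings of types $1$ and $2$ while $G_B$ admits types $3$, $4$ and $5$, and (ii) from criticality of $e_3$ that one of the two pieces admits a colouring of a type in the \emph{other} family. Two colourings of the same type on opposite sides glue (after relabelling colours) to a proper $k$-edge-colouring of $G$, a contradiction. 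The second criticality hypothesis --- using a \emph{different} edge of the cut --- is exactly the extra leverage your single-colouring analysis lacks; if you want to salvage your approach, the missing ingredient is a way to play a colouring of $G-e_2$ or $G-e_3$ against the one of $G-e_1$, which is in effect what the type bookkeeping accomplishes.
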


\begin{proof}
Let $G$ be a graph with $\Delta(G)=k$ and $\chi'(G)=k+1$. Furthermore, let $E'\subseteq E(G)$ be an inclusion-minimal edge-cut consisting of three critical edges $e_{1},e_{2}$ and $e_{3}$; let $A$ and $B$ be the components of $G-E'$, and let $e_{i}=x_{i}y_{i}$, where $x_{i}$ belongs to $A$ and $y_{i}$ to $B$. Let $G_A$ be the subgraph induced by $V(A) \cup \{y_{1},y_{2}, y_{3}\}$, and let $G_B$ be the subgraph induced by $V(B) \cup \{x_{1},x_{2}, x_{3}\}$.
We say a $k$-edge-colouring $\varphi$ of $G_A$ or $G_B$ is of
\begin{itemize}
\item type 1, if $\varphi(e_{1})=\varphi(e_{2})=\varphi(e_{3})$,
\item type 2, if $\varphi(e_{1})=\varphi(e_{2})\neq\varphi(e_{3})$,
\item type 3, if $\varphi(e_{1})=\varphi(e_{3})\neq\varphi(e_{2})$,
\item type 4, if $\varphi(e_{2})=\varphi(e_{3})\neq\varphi(e_{1})$,
\item type 5, if $\varphi(e_{1})\neq \varphi(e_{2})$, $\varphi(e_{1})\neq \varphi(e_{3})$, $\varphi(e_{2})\neq \varphi(e_{3})$.
\end{itemize}
Suppose to the contrary that there is an edge of $E'$ that is incident to a divalent vertex. We will show that there is a proper $k$-edge-colouring $\varphi_{A}$ of $G_A$ and a proper $k$-edge-colouring $\varphi_{B}$ of $G_B$ such that $\varphi_{A}$ and $\varphi_{B}$ can be combined to a proper $k$-edge-colouring of $G$ (by possibly relabeling the colours in one of the colourings), a contradiction.

In order to label the appearing colourings properly, we use the following definition: For an edge $e \in E'$, a $k$-edge-colouring $\varphi$ of $G-e$ and a colour $i \in \{1,...,k\}$, let $\varphi_{i}$ denote the $k$-edge-colouring of $G$ obtained from $\varphi$ by colouring $e$ with $i$.

Suppose to the contrary that $d(y_{1})=2$.
First of all we use the fact that $e_{1}$ is critical. Let $\varphi$ be a proper $k$-edge-colouring of $G-e_{1}$ such that w.l.o.g. $\bar{\varphi}(x_{1})=\varphi(y_{1})=\{1\}$ holds. Thus, for every $i \in \{2,...,k\}$ the Kempe chain $P_{x_{1}}(1,i)$ is an $x_{1},y_{1}$-path. Since $k>3$, at least two of these paths, say $P_{x_{1}}(1,2)$ and $P_{x_{1}}(1,3)$, contain w.l.o.g. $e_{2}$, which means $\varphi(e_{2})=1$. We first prove that the colouring $\varphi$ can be used to obtain a proper type 1 and a proper type 2 $k$-edge-colouring of $G_A$ and a proper type 3, a proper type 4 and a proper type 5 $k$-edge-colouring of $G_B$, no matter which colour the edge $e_{3}$ has received.

\underline{Case 1:  $\varphi(e_{3})=1$}\\
In this case, the colouring $\varphi_{1}\mid_{E(G_A)}$ is a proper type 1 $k$-edge-colouring of $G_A$. On the other hand, $\varphi_{2}\mid_{E(G_B)}$ is a proper type 4 $k$-edge-colouring of $G_B$. Furthermore, the colouring $\varphi'$, defined by $\varphi'=\varphi/P_{x_{3}}(1,2)$, satisfies $\varphi'(e_{3})=2$, while $\bar{\varphi}'(x_{1})=\varphi'(y_{1})=\{1\}$ and $\varphi'(e_{2})=1$ still hold. Therefore, $\varphi'_{1}\mid_{E(G_A)}$ is a proper type 2 $k$-edge-colouring of $G_A$, the colouring $\varphi'_{2}\mid_{E(G_B)}$ is a proper type 3 $k$-edge-colouring of $G_B$, and $\varphi'_{3}\mid_{E(G_B)}$ is a proper type 5 $k$-edge-colouring of $G_B$.

\underline{Case 2:  $\varphi(e_{3}) \neq 1$}\\
If $e_{3} \in P_{x_{1}}(1,\varphi(e_{3}))$, then the colouring $\varphi'$, defined by $\varphi'=\varphi/P_{x_{1}}(1,\varphi(e_{3}))$, satisfies $\bar{\varphi}'(x_{1})=\varphi'(y_{1})=\{\varphi(e_{3})\}$ and $\varphi'(e_{2})=\varphi'(e_{3})=1$. Hence, for any $i \in \{2,...,k\}\setminus\{\varphi(e_{3})\}$ the Kempe chain $P_{x_{1}}^{\varphi'}(i,\varphi(e_{3}))$ is not an $x_{1},y_{1}$-path, a contradiction. Therefore, we may assume $e_{3} \notin P_{x_{1}}(1,\varphi(e_{3}))$, which implies $e_{2} \in P_{x_{1}}(1,\varphi(e_{3}))$. As a consequence, the colouring $\varphi'$, defined by $\varphi'=\varphi/P_{x_{3}}(1,\varphi(e_{3}))$, satisfies $\bar{\varphi}'(x_{1})=\varphi'(y_{1})=\{1\}$ and $\varphi'(e_{2})=\varphi'(e_{3})=1$. Since $P_{x_{1}}^{\varphi'}(1,2)$ and $P_{x_{1}}^{\varphi'}(1,3)$ still contain $e_{2}$, Case 1 applies.

In both cases there is a proper type 1 and a proper type 2 $k$-edge-colouring of $G_A$, and a proper type 3, a proper type 4 and a proper type 5 $k$-edge-colouring of $G_B$.\\
We now use the fact that the edge $e_{3}$ is critical as well. Let $\varphi'$ be a proper $k$-edge-colouring of $G-e_{3}$ with $i \in \bar{\varphi}'(x_{3})$ and $j \in \bar{\varphi}'(y_{3})$. If $e_{1}$ and $e_{2}$ are coloured with the same colour, then $\varphi'_{j}\mid_{E(G_B)}$ is a proper $k$-edge-colouring of $G_B$ that is of type 1 or 2. On the other hand, if $\varphi'(e_{1}) \neq \varphi'(e_{2})$, then $\varphi'_{i}\mid_{E(G_A)}$ is a proper type 3, type 4 or type 5 $k$-edge-colouring of $G_A$.\\
In every case there are two proper $k$-edge-colourings, one of $G_A$ and one of $G_B$, that are of the same type. This contradicts the fact that $G$ is not $k$-edge-colourable.
\end{proof}
 
A graph without an even factor can be characterized as follows (see Theorem 6.2 (p. 221) in \cite{AK}). 

\begin{theo} \label{characterisation}
If $G$ is a graph, then $G$ has no even factor, if and only if there is an $X \subset V(G)$ with
\begin{align*} \label{eq1}
\tag{1}
\sum\limits_{v \in X} (d(v)-2) - q(G;X) < 0,
\end{align*}
where $q(G;X)$ denotes the number of components $D$ of $G-X$ such that $e_{G}(D,X) \equiv 1 \text{ (mod 2)}$.
\end{theo}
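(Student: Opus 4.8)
I would prove the two implications of the equivalence separately. First I record the reformulation that an even factor of $G$ is exactly a spanning subgraph $F$ in which every vertex has even degree at least $2$ (a component is eulerian and $\neq K_{1}$ iff it is connected with all degrees even and positive). The implication ``$G$ has an even factor $\Rightarrow$ (1) fails for every $X$'' is then a direct parity count. Fix $X\subseteq V(G)$ and let $D_{1},\dots,D_{m}$ be the components of $G-X$. Since each $D_{j}$ meets the rest of the graph only in $X$ and all $F$-degrees are even, summing the $F$-degrees over $V(D_{j})$ shows that $e_{F}(D_{j},X)$ is even for every $j$. Summing $F$-degrees over $X$ and using $d_{F}(v)\geq 2$ gives $\sum_{j}e_{F}(D_{j},X)\geq 2|X|-2|E(F[X])|$. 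Since $F\subseteq G$ we have $|E(F[X])|\leq|E(G[X])|$ and $e_{F}(D_{j},X)\leq e_{G}(D_{j},X)$, so
\[
\sum_{v\in X}(d(v)-2)=2|E(G[X])|+\sum_{j}e_{G}(D_{j},X)-2|X|\ \geq\ \sum_{j}\bigl(e_{G}(D_{j},X)-e_{F}(D_{j},X)\bigr).
\]
Every summand on the right is non-negative, and for a component with $e_{G}(D_{j},X)$ odd it is a difference odd$-$even, hence at least $1$; summing over these $q(G;X)$ components yields $\sum_{v\in X}(d(v)-2)\geq q(G;X)$, i.e.\ (1) cannot hold.

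For the converse I would argue contrapositively: assuming $\sum_{v\in X}(d(v)-2)\geq q(G;X)$ for all $X$, I must construct an even factor. The plan is to view this as a degree-constrained subgraph problem in which each vertex $v$ must receive a degree in the set $B_{v}=\{2,4,6,\dots\}\cap\{0,1,\dots,d(v)\}$, and to reduce it to a perfect-matching problem. The feature that makes the reduction available is that each admissible set $B_{v}$ has only unit-length internal gaps: between two consecutive admissible even degrees lies a single forbidden odd value. Hence Lov\'asz's general-factor construction (equivalently, a Tutte-type $f$-factor gadget adapted to the parity constraint) applies, and replacing every vertex of $G$ by the corresponding gadget produces an auxiliary graph $H$ that has a perfect matching if and only if $G$ has an even factor. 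Applying Tutte's $1$-factor theorem to $H$ then either yields the desired even factor or a Tutte barrier in $H$.

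The step I expect to be the main obstacle is turning a Tutte barrier of $H$ into a single set $X\subseteq V(G)$ that satisfies (1) exactly. Given a set $S$ with $o(H-S)>|S|$, where $o$ counts the components of odd order, I would have to show that $S$ can be taken ``gadget-respecting'', read off the underlying $X\subseteq V(G)$, and verify that the odd components of $H-S$ are in bijection with the components $D$ of $G-X$ having $e_{G}(D,X)$ odd — this is precisely where the \emph{parity} of the cut $e_{G}(D,X)$ is forced by the parity constraint built into the gadget — while the $|S|$ side must reproduce $\sum_{v\in X}(d(v)-2)$, the ``$-2$'' coming from the fact that degree $0$ is forbidden. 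Making this correspondence exact, including the boundary cases (the top value $d(v)$ may have either parity, and a divalent vertex has $B_{v}=\{2\}$), is the delicate bookkeeping. As a more self-contained alternative I would instead take an even subgraph $F$ covering a maximum number of vertices; if some vertex stays uncovered, the barrier $X$ can in principle be extracted by analysing the alternating cycle-toggling reachability from the uncovered vertices, at the cost of a Gallai--Edmonds-type structural analysis to certify maximality.
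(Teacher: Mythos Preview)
The paper does not prove this theorem at all; it is quoted verbatim from Akiyama--Kano \cite{AK} as a known characterization, so there is no in-paper argument to compare against.

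On its own merits: your necessity direction is correct and neatly done. For sufficiency, the route via Lov\'asz's general-factor theorem is the standard one (and is essentially how the result is obtained in \cite{AK}); the admissible sets $B_{v}=\{2,4,\dots\}$ do satisfy the unit-gap condition, and the resulting deficiency formula specialises to the inequality~(1). What you have written, however, is a plan rather than a proof: you correctly flag the crux---pulling a Tutte barrier in the gadget graph $H$ back to a set $X\subseteq V(G)$ and checking that the odd components of $H-S$ correspond exactly to the components $D$ of $G-X$ with $e_{G}(D,X)$ odd, while $|S|$ collapses to $\sum_{v\in X}(d(v)-2)$---but you do not execute it. This is not a wrong approach, only an unfinished one; completing it cleanly requires either quoting the general-factor deficiency formula (which already has the right shape) or carrying out the gadget bookkeeping you describe. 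Your fallback idea of analysing a maximum even subgraph directly would also succeed but is considerably more work than invoking the existing machinery.
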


We will give a more detailed formulation of Theorem \ref{characterisation} with regard to a minimal set $X$ that satisfies inequality (\ref{eq1}).

\begin{theo} \label{reformulation}
If $G$ is a connected graph, then $G$ has no even factor, if and only if there is an $X \subset V(G)$ with the following properties: 	Let
$D_1, \dots, D_n$ be the components of $G-X$.

\begin{itemize}
\item[(a)] $\sum\limits_{v \in X} (d(v)-2) - q(G;X) < 0$,
\item[(b)] $e_{G}(D_{i},v) \leq 1$ for every $v \in X$ and every $i \in \{1,...,n\}$,
\item[(c)] $X$ is stable,
\item[(d)] $e_{G}(D_{i},X) \equiv 1 \text{ (mod 2)}$ for every $i \in \{1,...,n\}$,
\item[(e)]  $\sum\limits_{\substack{v\in X \\ d(v)\neq 2}} (d(v)-3) + \frac{1}{2} \sum\limits_{i=1}^{n} (e_{G}(D_{i},X)-3) < \vert \{ v\in X : d(v)=2\}\vert$.
\end{itemize}

\end{theo}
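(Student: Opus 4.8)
The plan is to prove both implications; ``$\Leftarrow$'' is immediate from Theorem~\ref{characterisation} because property~(a) is literally inequality~(\ref{eq1}), so if some $X$ satisfies (a)--(e) then $G$ has no even factor (and (b)--(e) are not used). For ``$\Rightarrow$'', suppose $G$ has no even factor. Theorem~\ref{characterisation} gives a set $X\subseteq V(G)$ with
\[
f(G;X):=\sum_{v\in X}\bigl(d(v)-2\bigr)-q(G;X)<0 ,
\]
and I would take such an $X$ of smallest cardinality, writing $D_1,\dots,D_n$ for the components of $G-X$. Then (a) holds by choice, and $X\neq\emptyset$ because $f(G;\emptyset)=0$, so by connectedness of $G$ every $D_i$ is joined to $X$.

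The main tool is a vertex-removal estimate. Deleting $v\in X$ from $X$ merges the (say $t$) components of $G-X$ joined to $v$ into one component $D^{*}$ of $G-(X-v)$ and leaves the other components of $G-X$ unchanged; writing $p$ for the number of these $t$ merged components with odd edge-count to $X$, a direct count should give $e_G(D^{*},X-v)\equiv d(v)+p\pmod 2$ and
\[
f(G;X-v)=f(G;X)-\bigl(d(v)-2\bigr)+p-\varepsilon ,
\]
where $\varepsilon\in\{0,1\}$ is the parity of $e_G(D^{*},X-v)$. Thus $f(G;X-v)<0$ as soon as $p-\varepsilon\leq d(v)-2$, and I would check that this holds in two situations: (i) $v$ is joined to at most $d(v)-1$ components of $G-X$ --- then $p\leq d(v)-1$, and if $p=d(v)-1$ the number $d(v)+p$ is odd, forcing $\varepsilon=1$; and (ii) $v$ is joined to exactly $d(v)$ components, at least one of which has even edge-count to $X$ --- then again $p\leq d(v)-1$. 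In either situation $X-v$ is a smaller set satisfying inequality~(\ref{eq1}), contradicting the minimality of $X$.

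With this estimate, (b), (c), (d) follow. If some $v\in X$ sent two edges to a single component of $G-X$, then $v$ would be joined to at most $d(v)-1$ components, so situation (i) would apply --- contradiction; hence (b). If $X$ were not stable, a vertex $v\in X$ incident with an edge inside $X$ is joined to at most $d(v)-1$ components of $G-X$, again situation (i); hence (c). If some $D_i$ had even edge-count to $X$, pick $v\in X$ joined to $D_i$ (possible since $e_G(D_i,X)\geq 1$): either situation (i) applies, or $v$ is joined to exactly $d(v)$ components, one of which ($D_i$) has even edge-count, so situation (ii) applies --- contradiction. Hence (d) holds, i.e.\ $q(G;X)=n$.

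Finally, (e) follows formally from (a), (c), (d). By (c) every edge at $X$ leaves $X$, so with $s=|\{v\in X:d(v)=2\}|$ we have $\sum_{i=1}^{n}e_G(D_i,X)=\sum_{v\in X}d(v)=2s+\sum_{v\in X,\,d(v)\neq 2}d(v)$; substituting this and $q(G;X)=n$ into the left-hand side of (e) and simplifying should turn (e) into $\sum_{v\in X,\,d(v)\neq 2}(d(v)-2)<n$, which is exactly inequality~(\ref{eq1}) because divalent vertices contribute $0$ to $\sum_{v\in X}(d(v)-2)$. So (e) is automatic once (a), (c), (d) hold. The fiddliest point I anticipate is the parity bookkeeping in the removal estimate --- establishing $e_G(D^{*},X-v)\equiv d(v)+p\pmod 2$ and using it to kill the would-be ``off-by-one'' loss in the borderline case $p=d(v)-1$ --- together with checking, in each of (b), (c), (d), that a deletable vertex of type (i) or (ii) is genuinely available.
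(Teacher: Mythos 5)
Your proposal is correct and follows essentially the same route as the paper: take $X$ of minimum cardinality satisfying inequality~(\ref{eq1}), show that violating (b), (c) or (d) would let one delete a vertex $v$ from $X$ without destroying the inequality (your $p$--$\varepsilon$ parity bookkeeping is exactly the paper's observation that every $v\in X$ must be adjacent to at least $d(v)-1$ odd components, and the merged component stays odd in the borderline case), and then obtain (e) from (a), (c), (d) by the same algebraic manipulation. The only difference is presentational: you package the deletion step in a single general formula, while the paper argues the three cases (b)--(d) directly.
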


\begin{proof} By Theorem \ref{characterisation} it suffices to prove one
	direction. 
Let $G$ be a connected graph without an even factor. By Theorem \ref{characterisation}, there is a set that satisfies inequality (\ref{eq1}). Let $X \subset V(G)$ be the smallest set with $\sum_{v \in X}(d(v)-2)-q(G;X)<0$. We show that $X$ satisfies \textit{(b)} - \textit{(e)}.\\
For each $v \in X$ let $c(v)$ be the number of components $D$ of $G-X$ with $e_{G}(D,X) \equiv 1 \text{ (mod 2)}$ and $e_{G}(D,v) \geq 1$. We first prove $c(v)=d(v)$ for every $v \in X$, which implies properties \textit{(b)} -~\textit{(d)}, since $G$ is connected. \\
Let $x \in X$ and $X'=X\setminus \{x\}$. By the choice of $X$, the set $X'$ does not satisfy inequality \textit{(a)}. Furthermore, we observe that $-2 \vert X\vert + \sum_{v \in X} d(v) - q(G;X)$ is even. As a consequence,
\begin{align*}
0 & \leq \sum\limits_{v \in X'} (d(v)-2) - q(G;X')\\
& \leq -2\vert X \vert + 2 + \sum\limits_{v \in X} d(v) - d(x) - (q(G;X)-c(x))\\
& = -2\vert X \vert + \sum\limits_{v \in X} d(v) - q(G;X) + 2 - d(x) + c(x)\\
& \leq -2 +2 - d(x) + c(x).
\end{align*}
Thus, $d(x) \leq c(x)$, which implies $d(x) = c(x)$. Therefore, the set $X$ satisfies \textit{(b)} - \textit{(d)}.\\
Next, by using \textit{(c)} and \textit{(d)} we can transform \textit{(a)} to \textit{(e)} as follows:
\begin{align*}
&\sum\limits_{v \in X} (d(v)-2) - q(G;X) < 0\\
{\underset{\text{\textit{(d)}}}{\Leftrightarrow}} \quad &\sum\limits_{v \in X} \left(d(v)-2\right)  < n\\
\Leftrightarrow \quad &\frac{1}{2} \sum\limits_{v \in X} \left(d(v)-2\right)+\sum\limits_{v \in X} \left(d(v)-2\right) < \frac{3}{2} \sum\limits_{i=1}^{n} 1\\
{\underset{\text{\textit{(c)}}}{\Leftrightarrow}} \quad &\frac{1}{2}\sum\limits_{i=1}^{n} \left(e_{G}(D_{i},X)\right)- \vert X \vert +\sum\limits_{v \in X} \left(d(v)-2\right) <  \sum\limits_{i=1}^{n} \frac{3}{2}\\
\Leftrightarrow \quad &\frac{1}{2}\sum\limits_{i=1}^{n} \left( e_{G}(D_{i},X)-3\right) +\sum\limits_{v \in X} \left(d(v)-3\right) < 0\\
\Leftrightarrow \quad &\frac{1}{2}\sum\limits_{i=1}^{n} \left( e_{G}(D_{i},X)-3\right) +\sum\limits_{\substack{v\in X \\ d(v)\neq 2}} \left(d(v)-3\right) - \vert\{v\in X: d(v)= 2\} \vert < 0\\
\Leftrightarrow \quad &\sum\limits_{\substack{v\in X \\ d(v)\neq 2}} \left(d(v)-3\right) + \frac{1}{2} \sum\limits_{i=1}^{n} \left(e_{G}(D_{i},X)-3\right) < \vert \{ v\in X: d(v)=2\}\vert.
\end{align*}
\end{proof}

\section{Proof of Theorem \ref{main result}}

For $k=3$ there is nothing to prove. Let $ k > 3$. 
Let $G$ be a $k$-critical graph without an even factor. Hence, there is a subset $X \subset V(G)$ that satisfies conditions \textit{(a)} -\textit{(e)} of Theorem \ref{reformulation}. We show that $X$ contains more than $2k-6$ divalent vertices.\\
Let $D_{1},...,D_{n}$ be the components of $G-X$ and $g: \{D_{1},...,D_{n}\} \to \mathbb{R}$ with
\begin{center}
$g(D_{i}):=\sum\limits_{v \in N(D_{i})} \frac{d(v)-2}{d(v)}$ for $i \in \{1,...,n\}$.
\end{center}
Properties \textit{(a)} -\textit{(d)} imply
\begin{align*}
\sum\limits_{i=1}^{n} g(D_{i})	\quad =	\quad \sum\limits_{i=1}^{n} \sum\limits_{v \in N(D_{i})} \frac{d(v)-2}{d(v)} 	\quad{\underset{\text{\textit{(b)},\textit{(c)}}}{=}}	\quad \sum\limits_{v \in X} d(v)-2 	\quad{\underset{\text{\textit{(a)}}}{<}}	\quad q(G;X) 	\quad{\underset{\text{\textit{(d)}}}{=}}	\quad n.
\end{align*}
Thus, there is at least one component $D\in \{D_{1},...,D_{n}\}$ with $g(D) < 1$. 
Every critical graph does not contain a vertex of degree 1. Therefore, 
there are at most two vertices in $N(D)$ that are not divalent. Moreover, if $N(D)$ contains two vertices of degree at least 3, then one of them is of degree 3 and the other is of degree at most 5. Furthermore, since every critical graph is bridgeless, the component $D$ has at least three neighbours in $X$ by \textit{(b)} and \textit{(d)}. In conclusion, we can assume $N(D)=\{x,y,w_{1},...,w_{l}\}$, where $l \geq 1$, the vertices $w_{1},...,w_{l}$ are divalent and either $d(y)=2$, or $d(y)=3$ and $d(x)\leq5$. We consider the following two cases:\\
\ \\
\underline{Case 1: $d(x)<k$}\\
By condition \textit{(b)} and Lemma \ref{cutlemma} it follows that
\begin{align*}
l > k(k-d(y))-d(x)+1 \geq k(k-3)-k+2 = k(k-4)+2 \geq 2k-6.
\end{align*}
\ \\
\underline{Case 2: $d(x)=k$}\\
If there are three components adjacent to $x$ such that each has exactly three edges to $X$, then none of this components is adjacent with a divalent vertex by Lemma \ref{3-cutlemma}. In conclusion, we obtain with property \textit{(b)}
\begin{align*}
\sum_{\substack{i \in \{1,...,n\} \\ x \in N(D_{i})}} g(D_{i}) \geq d(x)-2+6\left(\frac{1}{3}\right)=d(x).
\end{align*}
Since $\sum_{i=1}^{n} g(D_{i})<n$, there is another component $D' \in \{D_{1},...,D_{n}\}$ with $g(D')<1$, but $x \notin N(D')$. If $D'$ is not adjacent to a vertex of degree $k$, then $N(D')$ contains at least $2k-6$ divalent vertices since Case 1 applies. Otherwise $X$ contains at least two vertices of degree $k$. Since $G$ is bridgeless, property \textit{(d)} implies that $e_{G}(D_{i},X) \geq 3$ for every $i \in \{1,...,n\}$. In conclusion, property \textit{(e)} implies that
\begin{align*}
\vert \{ v\in X: d(v)=2\}\vert > 2(k-3)=2k-6.
\end{align*}
If at most two components adjacent to $x$ have exactly three edges to $X$, then by properties \textit{(b)} - \textit{(d)} there are at least $d(x)-2$ components such
that each has at least five edges to $X$. Therefore, property \textit{(e)} implies that
\begin{align*}
\vert \{ v\in X: d(v)=2\}\vert>d(x)-3+(d(x)-2)\frac{1}{2}(5-3)=2k-5,
\end{align*}
and the proof is completed.

\subsection*{Acknowledgements:}
We thank the anonymous reviewers for careful reading and their helpful comments, which in particular led to shorter proofs of Lemma \ref{3-cutlemma} and Theorem \ref{reformulation}.

\end{document}